\newcommand{\xleftrightarrows}[2][]{\mathrel{%
 \raise.40ex\hbox{$  
       \ext@arrow 3095\leftarrowfill@{\phantom{#1}}{#2}$}%
 \setbox0=\hbox{$\ext@arrow 0359\rightarrowfill@{#1}{\phantom{#2}}$}%
 \kern-\wd0 \lower.40ex\box0}}
\def\leftrightarrowfill@{%
 \arrowfill@\leftarrow\relbar\rightarrow%
}  
\newtheorem{teor}{Theorem} [section]
\newtheorem{prop}[teor]{Proposition}
\theoremstyle{definition}
\newtheorem{rem}[teor]{Remark}
\tikzset{ font={\fontsize{8pt}{10}\selectfont}}
\pgfplotsset{compat=1.16}
\newcommand{\eps}{\varepsilon}
\newcommand{\muu}{\mu}
\begin{document}

\title{Dynamical mechanism behind ghosts unveiled in a map complexification}

\author{Jordi Canela}
\thanks{Corresponding author: J. Canela (canela@uji.es)}
\affiliation{\textit{Institut Universitari de Matem\`atiques i Aplicacions de Castell\'o, Universitat Jaume I, Castell\'o,  Spain}}
\author{Llu\'is Alsed\`a}
\affiliation{\textit{Departament de Matem\`atiques, Universitat Aut\`onoma de Barcelona, Cerdanyola del Vall\`es 08193,  Barcelona, Spain}}
\affiliation{\textit{Centre de Recerca Matem\`atica, Edifici C, Campus de Bellaterra. Cerdanyola del Vall\`es 08193,  Barcelona, Spain}}
\author{N\'uria Fagella}
\affiliation{\textit{Departament de Matem\`atiques i Inform\`atica, Universitat de Barcelona, Barcelona,  Spain}}
\author{Josep Sardany\'es} 
\thanks{Corresponding author: J. Sardany\'es (jsardanyes@crm.cat)}
\affiliation{\textit{Centre de Recerca Matem\`atica, Edifici C, Campus de Bellaterra. Cerdanyola del Vall\`es 08193,  Barcelona, Spain}}


\date{\today}


\begin{abstract}
Complex systems such as ecosystems, electronic circuits, lasers or chemical reactions can be modelled by dynamical systems which typically experience bifurcations. Transients typically suffer extremely long delays at the vicinity of bifurcations and it is also known that these transients follow scaling laws as the bifurcation parameter gets closer the bifurcation value in deterministic systems. The mechanisms involved in local bifurcations are well-known. However, for saddle-node bifurcations, the relevant dynamics after the bifurcation occur in the complex phase space. Hence, the mechanism responsible for the delays and the associated inverse-square root scaling law for this bifurcation can be better understood  by looking at the dynamics in the complex space. We follow this approach and  complexify a simple ecological system undergoing a saddle-node bifurcation. The discrete model describes a biological system with facilitation (cooperation) under habitat destruction for species with non-overlapping generations. We study the complex (as opposed to real) dynamics once the bifurcation has occurred. We identify the fundamental mechanism causing these long delays (called ghosts), given by two repellers in the complex space. Such repellers appear to be extremely close to the real line, thus forming a narrow channel close to the two new fixed points and responsible for the slow passage of the orbits, which remains tangible in the real numbers phase space. We analytically provide the relation between the inverse square-root scaling law and the multipliers of these repellers. We finally prove that the same phenomenon occurs for more general i.e., non-necessarily polynomial, models.  
\end{abstract}

\keywords{Complexification; Discrete dynamics; Ghosts; Holomorphic dynamics; Saddle-node bifurcation; Scaling laws; Tansients.}

\maketitle

\section{Introduction}

Bifurcations are responsible for qualitative changes in dynamical systems due to parameter changes~\cite{Kuznetsov1998,Strogatz2000}. Local bifurcations typically involve stability shifts or collisions between fixed points. Classical examples are transcritical, saddle-node (hereafter labeled as s-n, also named fold or tangent), pitchfork, or Hopf-Andronov bifurcations~\cite{Strogatz2000}. Bifurcations occur in most physical systems and have been mathematically described in
elastic-plastic materials \cite{Nielsen1993}, electronic circuits \cite{Kahan1999,Trickey1998}, 
or open quantum systems \cite{Ivanchenko2017}, among many others. Bifurcations have been also largely investigated in population dynamics ~\cite{Rietkerk2004,Staver2011,Carpenter2011,Hastings2018,Morozov2020} since they often involve important changes such as the separation between species' persistence and extinctions. Further theoretical research in socioecological systems \cite{May2008,Lade2013}, in autocatalytic systems~\cite{Fontich2008,Gimeno2018,Sardanyes2019}, 
in the fixation of alleles in population genetics and biological or computer virus propagation~\cite{Murray2002,Ott2002,Hinrichsen2000,Odor2008}, has revealed bifurcation phenomena, often governed by abrupt changes (typically due to s-n bifurcations). Additionally, bifurcations have been identified experimentally in many physical~\cite{Gil1991,Trickey1998,Das2007,Gomez2017},
chemical~\cite{Maselko1982,Strizhak1996}, and biological systems \cite{Dai2012,Gu2014}. 

One of the most remarkable properties of systems approaching a local bifurcation is that transients' lengths slow down drastically. The length of these transients typically scales with the distance to the bifurcation value~\cite{Strogatz2000,Leonel2016}. Such scaling properties are found both in continuous-time (flows) and discrete-time (maps) dynamical systems. For instance, the length of transients, $\tau$, in transcritical bifurcations diverges as a power law  $\tau \sim |\mu - \mu_c|^{-1}$~\cite{Leonel2016,Teixeira2015}, with
$\mu$ and $\mu_c$ being, respectively, the control parameter and the value at which it bifurcates. The same scaling exponent is found in the supercritical Pitchfork bifurcation in flows~\cite{Leonel2016}, and in the Pitchfork and the period-doubling bifurcation in maps~\cite{Teixeira2015}. For the s-n, transients scale as $\tau \sim |\mu - \mu_c|^{-1/2}$ for flows and maps~\cite{Strogatz2000,Fontich2008,Duarte2011}. Remarkably, this scaling law was found experimentally in an electronic circuit modelling Duffing's oscillator~\cite{Trickey1998}. The same scaling exponent has been recently found in a delayed differential equation suffering a s-n bifurcation~\cite{Gimeno2018}. 

In short, the s-n bifurcation involves the collision and consequent annihilation of fixed points. This annihilation actually involves the jump of these two equilibria from the real numbers phase space to the complex  one~\cite{Strogatz2000,Sardanyes2006,Fontich2008,Sardanyes2019,Gimeno2018}. An interesting phenomenon tied to this fact is that, despite no fixed points are present in the real numbers phase space after the bifurcation takes place, the dynamics are still influenced by such points which now lie in the complex plane (for one-variable dynamical systems). This is why the orbits' delay right after the bifurcation is said to be governed by a ghost~\cite{Strogatz2000}. Ghost transients tied to s-n bifurcations have been identified in models of charge density waves~\cite{Strogatz1989}, hypercycles~\cite{Sardanyes2006,Sardanyes2007}, and ecological systems with facilitation: semi-arid ecosystems~\cite{Vidiella2018} or metapopulations with facilitation and habitat destruction~\cite{Sardanyes2019}. Notwithstanding, the interest in grasping the fundamental mechanism behind the ghost effect has provided few works deriving the inverse square-root scaling law using complex variable~\cite{Fontich2008,DJMS}. Despite these investigations, the dynamical mechanism taking place at the complex phase space and responsible for such delays still remains unknown.

It is well known that complex variables and complex analysis techniques are useful in areas of physics. For instance, in thermodynamics~\cite{Szostakiewicz2014}, hydrodynamics~\cite{Coleman1984,Marner2017} and quantum mechanics~\cite{Dirac1937}. By extension, the use of complex analysis also has applications in engineering fields such as nuclear~\cite{Cacuci2010}, aerospace~\cite{Poozesh2016}, or electrical~\cite{Bird2007} engineering, among many others; or in many branches of mathematics. Particularly, in dynamical systems, one can view not only the real phase space but also the  parameter space of analytic maps as restrictions of their complex analogues. This complexification has proven to be fruitful towards the understanding of phenomena often hidden from the real phase space. For instance,  complex techniques are needed to obtain an upper bound on the number of stable equilibria in an analytic real map, in terms of the number of zeros of its derivative. A  paradigmatic example is given by the quadratic family $x^2+c$ (or equivalently the logistic one $\lambda x(1-x)$) which,  via renormalization, models the local dynamics of every analytic map near a simple critical point. Complexifying both the variable $x$ and the parameter $c$, we are led to the well known Mandelbrot set, which allows a complete understanding of the cascade of period doubling bifurcations observed in the real parameter space. In this same family, the density of hyperbolic parameters in the real parameter line is also a recent fundamental result which required sophisticated complex techniques \cite{GS,Lyu}. 

In this paper we investigate the complex (as opposed to real)  dynamics after a s-n bifurcation in a map describing the dynamics of species facilitation under habitat destruction considering non-overlapping generations (see Ref.~\cite{Sardanyes2019} for a continuous-time approach). We have found that the long delays after the s-n bifurcation are due to two repellers symmetrically located in the complex plane above and below the real line. Short after the s-n bifurcation value, these fixed points are extremely close to the real line, leaving a narrow channel by which real orbits must go through, with extremely slow speed. We derive the scaling law using complex techniques, obtaining a simple relation between the multipliers of the repellers and the inverse square-root scaling law. Finally, we extend these results to more general (non-necessarily polynomial) families which exhibit s-n bifurcations.

\section{Mathematical model and results} 
The model we analyse in this paper describes the population dynamics of a single-species where the individuals cooperate through facilitation processes, in the context of  metapopulations with facilitation and habitat destruction. Specifically, given $x_n\geq 0$ the population at stage $n\geq 0$, the map is given by  
\begin{equation}
F(x_n) = x_{n+1} = x_n  + \muu x^2_n ( 1 - D - x_n) - \gamma x_n.
\label{map}
\end{equation}
This is a discrete-time version of the model  $\dot{x} = \muu x^2 \left( 1 - D - x\right) - \gamma x$ studied in ~\cite{Sardanyes2019}, 
%
%
obtained using an Euler step (i.e. assuming $x_{n+1} - x_n \approx dx/dt$). The constant $\mu >0$ is the intrinsic growth rate while $\gamma$ denotes the density-independent death rate of individuals. Note that the population includes a logistic-like growth constrain introducing intra-specific competition with a normalised carrying capacity. This logistic function also includes a fraction of habitat destroyed, $D \in [0,1]$. For the purposes of our work we will limit the range to $\gamma \in (0,1]$, mainly focusing on the impact of parameter $D$ in the delaying effects right after the saddle-node (hereafter s-n) bifurcation.

Most of the previous research on ghost transients has focused on time-continuous systems. These mainly include autocatalytic replicators~\cite{Sardanyes2006,Sardanyes2007,Fontich2008} and metapopulations~\cite{Fontich2010}. In order to study such delays in populations with non-overlapping generations such as insects or annual plants, it is better to use a discrete-time approach, as the one given by Eq.~\eqref{map}, although this is rarely found in the literature.  Nevertheless, some works explored s-n bifurcations in maps within the framework of the so-called Allee effects~\cite{Duarte2012} and single-species ecological models with harvesting~\cite{DJMS}. Also, spatial dynamics for hypercycles (two-species cross-catalytic systems) revealed the presence of ghost transients~\cite{Sardanyes2006a} as well.



\subsection{Dynamics on the reals} \label{sec:fixos}

The fixed points of Map~\eqref{map} are computed from $F(x) = x$, obtaining $x_0^* =  0$, and the pair 
$$x_{\pm}^*  = \frac{1}{2} \left( 1 - D \pm \sqrt{(1-D)^2 - \frac{4 \gamma}{\mu}} \right).$$ 
The system undergoes  a s-n bifurcation when the discriminant of $x_{\pm}^*$ equals zero.
This gives bifurcation values for every one of the parameters involved:
\[
\mu_c = \frac{4\gamma}{(1-D)^2}, \phantom{xx} \gamma_c = \frac{\mu (1-D)^2}{4},  \phantom{xx} D_c = 1 - 2 \sqrt{\gamma/\mu},
\]
although we will use mostly $D\in[0,1]$ (fraction of habitat destroyed) as the control parameter.

We now focus on the properties of map~\eqref{map} for parameter values close to the s-n bifurcation. Let us start with the linear stability of the fixed points. Given a fixed point $x_0$ of a differentiable map $F(x)$, its multiplier is given by $\lambda(x_0)=F'(x_0)$. The fixed point is called \emph{attracting} if $|\lambda(x_0)|<1$, \emph{repelling} if $|\lambda(x_0)|>1$, and \emph{indifferent} if $|\lambda(x_0)|=1$, where $|\cdot|$ denotes the absolute value (or the complex modulus). If the  multiplier equals one we say that the fixed point is  \emph{parabolic} and, if the map is analytic,  it can be written locally as 
$
F(x)=x + c(x-x_0)^{\nu}+\mathcal{O}\left((x-x_0)^{\nu+1}\right),
$
 where $c\in\mathbb{R}\setminus\{0\}$ and $\nu\geq2$. In this case $x_0$ is said to be a fixed point of multiplicity $\nu$ since it is a solution of multiplicity $\nu$ of the equation $F(x)=x$.

The stability of the fixed points of  Map~\eqref{map}  can be obtained from the multiplier, which is computed as
$$\lambda(x) =F'(x)= \frac{dF(x)}{dx} =  1-\gamma+ 2 \muu x \left(1-D - \frac{3x}{2}\right).$$
The stability of $x_0^*$ is computed from $|\lambda(0)| =|1-\gamma|$. Note that the modulus of the multiplier is always less than $1$ for the considered range of $0 < \gamma \leq 1$. Hence, this fixed point is locally stable (it is attracting), being a super-attractor for $\gamma = 1$ since the multiplier equals 0.

Next, we study the stability of the fixed points $x_{\pm}^*$ near the parameter $D_c$. To do so, we fix $0 < \gamma \leq 1$ and $\mu>0$ and denote $\eps:=D-D_c$ (equivalently, $D=D_c+\eps$). We denote by $F_{\eps}$ the map $F$ with parameters $\gamma$, $\mu$, and $D_{c}+\eps$. We also denote by  $x_c=(1-D_c)/2$ the parabolic fixed point at which the fixed points  $x_{\pm}^*$ collide when $\eps=0$. Using this notation, $F_{\eps}$ is expressed as
\begin{equation}
F_{\eps}(x) = (1-\gamma)x  + \muu  ( 2x_c -\eps) x^2 - \muu x^3.
\label{eq:Feps}
\end{equation}

To point out the dependence of $x_{\pm}^*$ on $\eps$, we denote the fixed points by $x_{\pm}^*(\eps)$. A direct study of the stability of $x_{\pm}^*(\eps)$ in terms of $\eps$ yields complicated expressions.
 To sort out this problem we consider the map $G_{\eps}(y)=F_{\eps}(y+x_c)-x_c$, obtained by conjugating $F_{\eps}$ with the translation $x\rightarrow x-x_c$. Thus the dynamics of $G_{\eps}$ coincides with the dynamics of $F_{\eps}$ shifted by translation.  In particular, $y=-x_c$ is the attracting fixed point of $G_{\eps}$ which corresponds to the attracting fixed point $x=0$ of $F_{\eps}$, while bifurcation occurs at the double fixed point $y=0$. For $\eps\neq 0$, the map $G_{\eps}(y)$ has two fixed points around $0$, given by $y_{\pm}(\eps)=x_{\pm}^*(\eps)-x_c$. Moreover, the multipliers of $y_{\pm}(\eps)$, $\lambda(y_{\pm}(\eps))=G'(y_{\pm}(\eps))$, coincide with the multipliers of $x_{\pm}^*(\eps)$, $\lambda(x_{\pm}^*(\eps))=F'(x_{\pm}^*(\eps))$. Therefore, in order to study the stability of $x_{\pm}^*(\eps)$ it is enough to study the  stability of $y_{\pm}(\eps)$, task which occupies the remaining part of this section.  In Fig.~\ref{fig:epsesquema} we show a summary of the stability of $ x_{\pm}^*(\eps)$ for $|\eps|$ small.

\begin{figure}
{\centering 
  \includegraphics[width=0.35\textwidth]{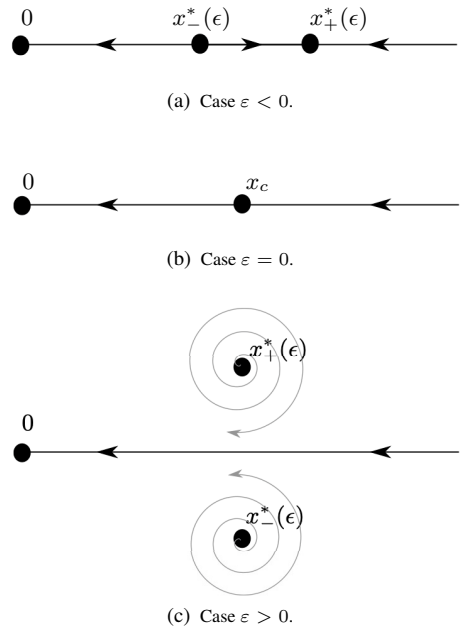}}
\caption{Scheme of the saddle-node bifurcation which takes place around $x_c$ for $|\eps|$ small. (a) Before bifurcation three fixed points are found in the real line. (b) At bifurcation both fixed points $x^*_-(\eps)$ and $x^*_+(\eps)$ collide at $x_c$. (c) After the bifurcation, $\eps>0$, both fixed points are placed in the complex plane.}
\label{fig:epsesquema}
\end{figure}

The map $G_{\eps}$ is given by
\begin{equation}
G_{\eps}(y) = - \mu x_c^2\eps+(1-2\mu x_c\eps)y-\mu(x_c+\eps)y^2-\mu y^3.
\label{eq:Geps}
\end{equation}
The fixed points $y_{\pm}(\eps)=x_{\pm}^*(\eps)-x_c$ are given by
\begin{equation} \label{eq:fixedG}
y_{\pm}(\eps)=-\frac{\eps}{2}\pm\sqrt{-x_c\eps +\frac{\eps^2}{4}}.
\end{equation}

If  $\eps<0$, $|\eps|$ small, the fixed points $y_{\pm}(\eps)$ can be written as
$$
 y_{\pm}(\eps)=\pm \, x_c^{1/2}\sqrt{-\eps}+\mathcal{O}(\eps).
$$
Notice that since $D_c\in[0,1]$ (by assumption $D\in[0,1]$), the number $x_c=(1-D_c)/2$ is positive. Therefore, the points $y_{\pm}(\eps)$ are real. Their multipliers, $\lambda(y_{\pm}(\eps))=G_{\eps}'(y_{\pm}(\eps))$,  are given by
$$\lambda(y_{\pm}(\eps))=1 \mp 2\, \mu \, x_c^{3/2}\sqrt{-\eps}+\mathcal{O}(\eps)=\lambda(x^*_{\pm}(\eps)).$$
Hence, $y_-(\eps)$ is repelling and $y_+(\eps)$ is attracting. We conclude that for $\eps\lesssim 0$,  the fixed points $x_{\pm}^*(\eps)$ of the original system $F_{\eps}$ satisfy $0<x_-^*(\eps)<x_+^*(\eps)$, $x_-^*(\eps)$ is repelling, and $x_+^*(\eps)$ is attracting (see Fig.~\ref{fig:epsesquema} (a)).

On the other hand, if $\eps>0$, $\eps $ small, then the fixed points are complex numbers given by
\[
 y_{\pm}(\eps)=\pm \,i \; x_c^{1/2}\eps^{1/2}+\mathcal{O}(\eps),
 \]
where $i=\sqrt{-1}$. Their multipliers, $\lambda(y_{\pm}(\eps))=G_{\eps}'(y_{\pm}(\eps))$, are given by
\begin{equation}\label{eq:multy-}
\lambda(y_{\pm}(\eps))=1 \mp 2\, i\cdot  \mu \, x_c^{3/2}\eps^{1/2}+\mathcal{O}(\eps)=\lambda(x^*_{\pm}(\eps)).
\end{equation}
In particular, if $\eps$ is small then $|\lambda(y_{\pm}(\eps))|>1$. Therefore, the fixed points $y_{\pm}(\eps)$ are repelling, and so are the fixed points $x_\pm^*(\eps)$ of $F_{\eps}$ (see Fig.~\ref{fig:epsesquema} (c)).

\begin{figure*}
{\centering 
  \includegraphics[width=\textwidth]{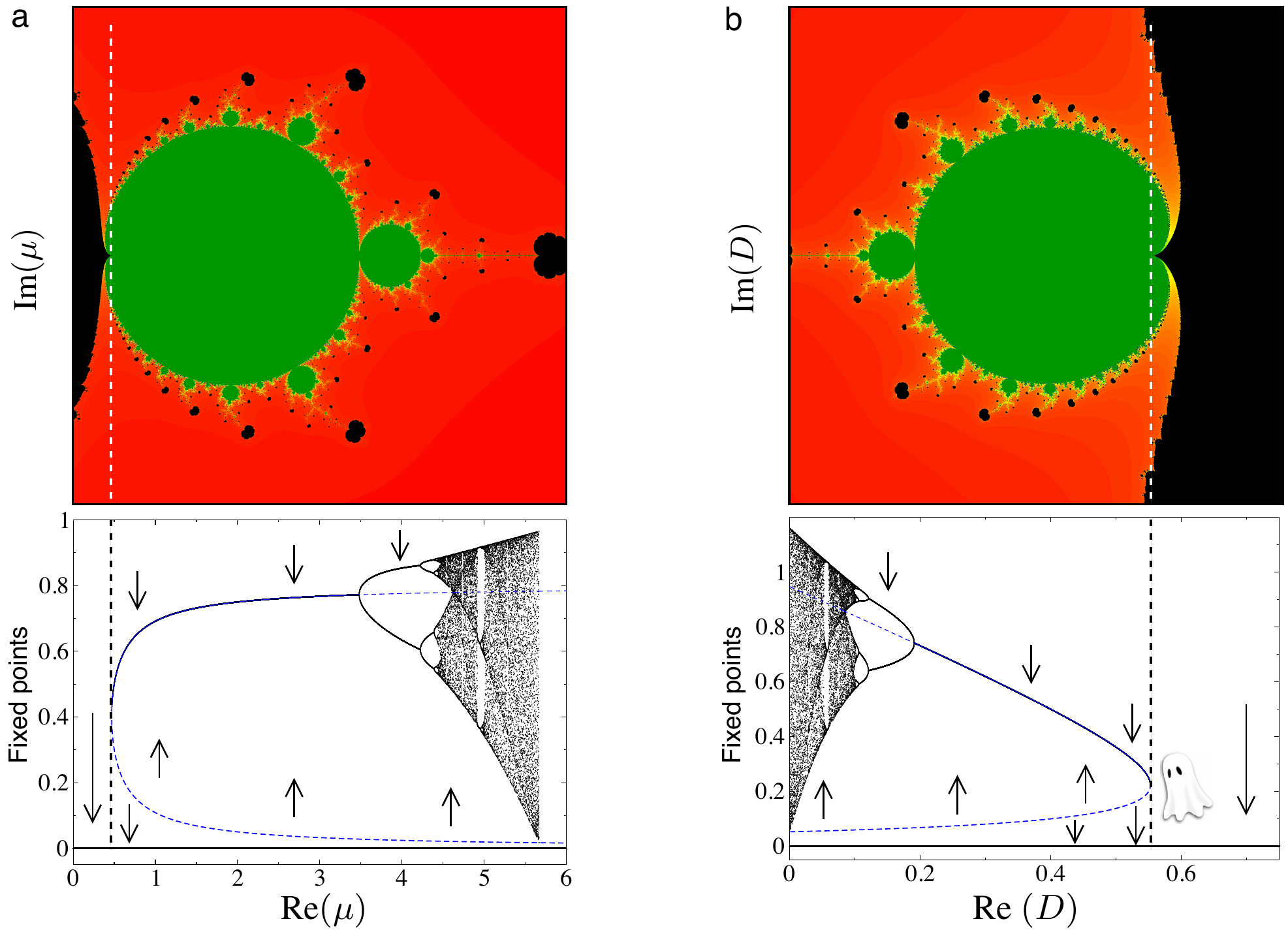}}
\caption{Bifurcation diagrams: (a) $D = 0.2$, $\gamma = 0.075$, for $0 \leq \mu \leq 6$. Here $\mu_c = 0.46875$; (b) $\mu = 4$, $\gamma = 0.2$, for $0 \leq D \leq 0.75$. Here $D_c = 0.5527864045\cdots$. The upper figures show the bifurcation from a complex point of view (compare \S \ref{sec:complex}). The lower figures show the bifurcations from a real point of view. For the later case, we have numerically built the bifurcation diagrams using two different initial conditions $x_0 = 10^{-4}$ and $x_0 = 0.5$. The  blue lines correspond to the stable (solid) and unstable (dashed) fixed points, being the lower branch the fixed point $P_-^*$ and the upper one the point $P_+^*$. Note that the origin is locally stable.}
\label{fig:param}
\end{figure*}

\subsection{The saddle-node bifurcation from the complex}\label{sec:complex}

\begin{figure*}[hbt!]
\centering
\subfigure[\small{ $D=D_c-0.1$} ]{
    \begin{tikzpicture}
    \begin{axis}[width=0.43\textwidth, axis equal image, scale only axis,  enlargelimits=false, axis on top, 
    ]
      \addplot graphics[xmin=-0.6,xmax=1,ymin=-0.4,ymax=0.4] {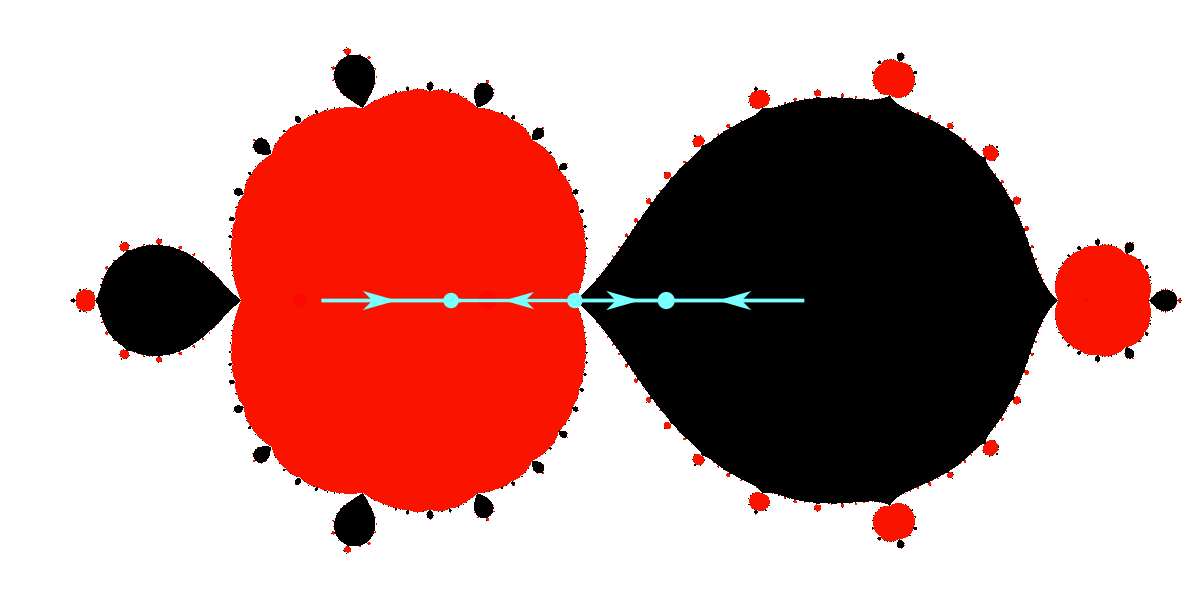};
    \end{axis}
  \end{tikzpicture}
\put(-145,70){ $0$}
\put(-128,70){ $x_-^*$}
\put(-108,70){ \textcolor{white}{$x_+^*$}}
  }
\subfigure[\small{ $D=D_c$} ]{
    \begin{tikzpicture}
    \begin{axis}[width=0.43\textwidth, axis equal image, scale only axis,  enlargelimits=false, axis on top, 
    ]
      \addplot graphics[xmin=-0.6,xmax=1,ymin=-0.4,ymax=0.4] {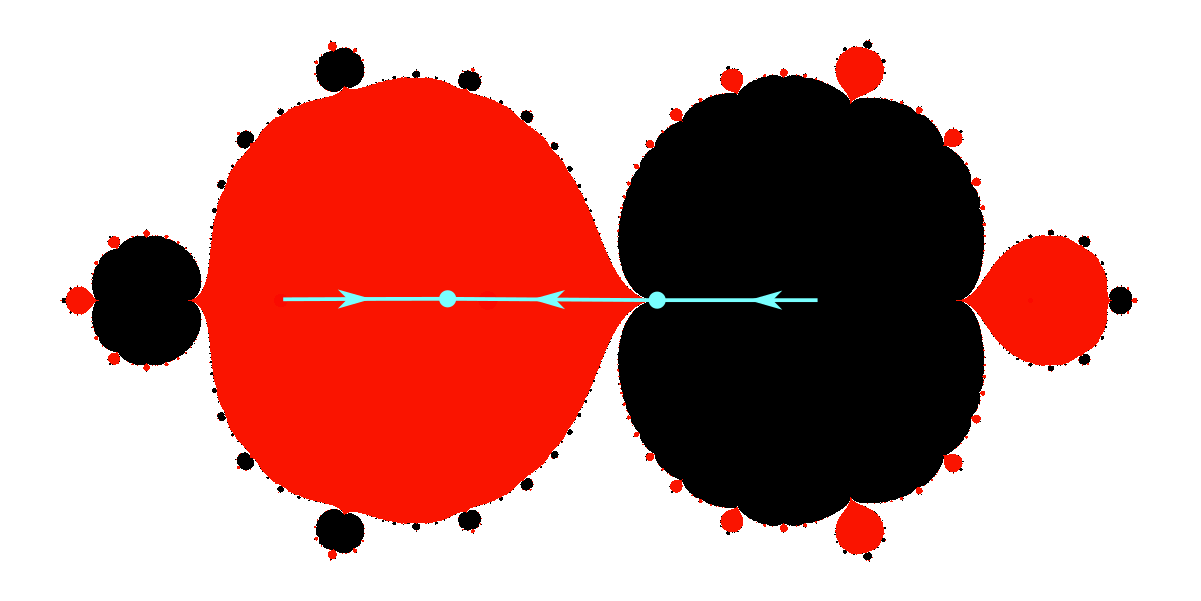};
    \end{axis}
  \end{tikzpicture}
\put(-145,70){ $0$}
\put(-108,70){ \textcolor{white}{$x_c$}}
  }
\subfigure[\small{ $D=D_c+10^{-6}$} ]{
    \begin{tikzpicture}
    \begin{axis}[width=0.43\textwidth, axis equal image, scale only axis,  enlargelimits=false, axis on top, 
    ]
      \addplot graphics[xmin=-0.6,xmax=1,ymin=-0.4,ymax=0.4] {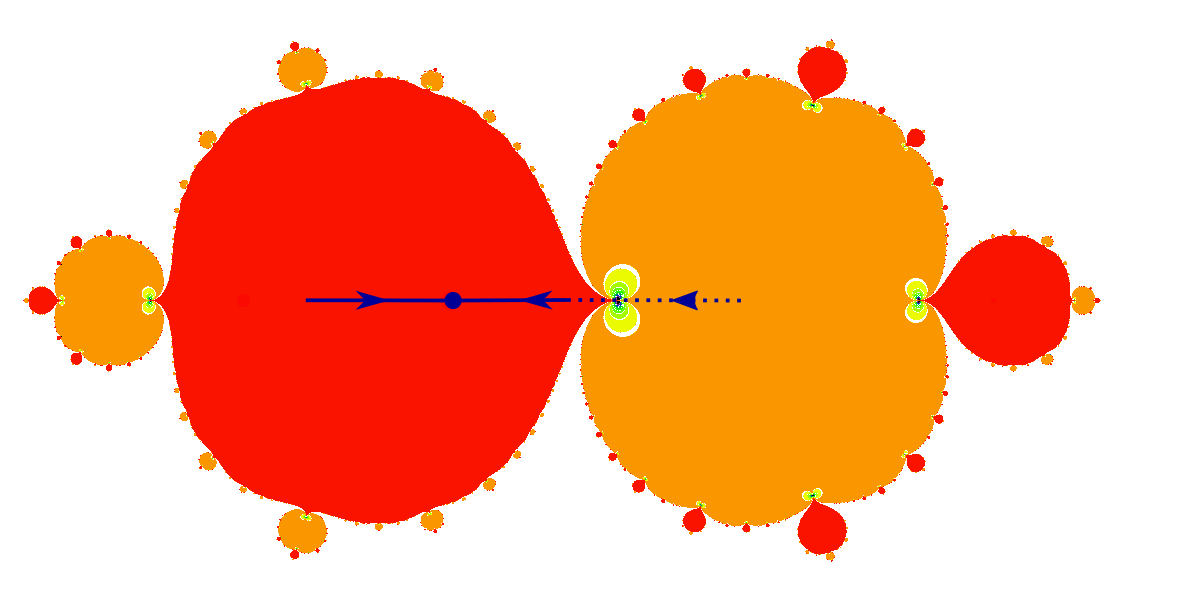};
    \end{axis}
  \end{tikzpicture}
\put(-145,70){ $0$}
  }
\subfigure[\small{ Zoom in on (c).} ]{
    \begin{tikzpicture}
    \begin{axis}[width=0.43\textwidth, axis equal image, scale only axis,  enlargelimits=false, axis on top, 
    ]
      \addplot graphics[xmin=0.2,xmax=0.25,ymin=-0.0125,ymax=0.0125] {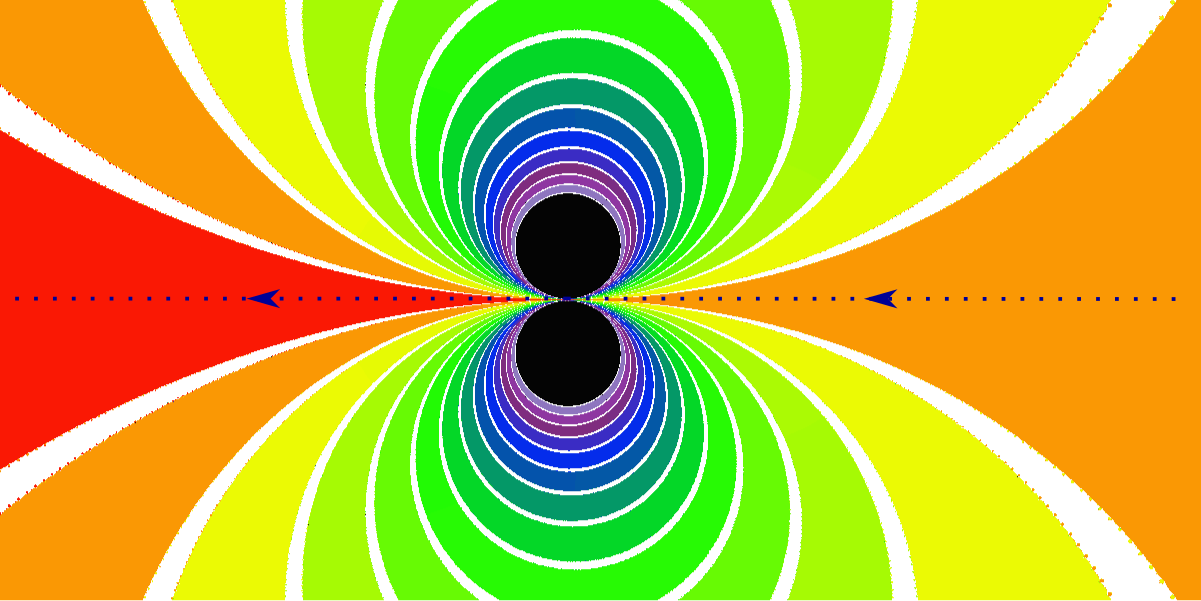};
    \end{axis}
  \end{tikzpicture}
\put(-132,68){ \textcolor{white}{$ \times$}}
\put(-132,75){ \textcolor{white}{$ x_+^*$}}
\put(-132,59){ \textcolor{white}{$ \times$}}
\put(-130,53){ \textcolor{white}{$ x_-^*$}}
  }
\caption{{\protect\small Dynamical planes of $F$ for  $\mu=4$, $\gamma=0.2$, and different values of $D$. White colour indicates that points converge to $z=\infty$, the scaling from purple (slow convergence) to red (fast convergence) indicates that the point converges to $z=0$, and black indicates that the point has not converged to a given neighbourhood of $z=0$ or $z=\infty$ after $10^5$ iterates. In figure (a) there is an attracting fixed point other than $z=0$. In figure (b) the former attracting fixed point becomes parabolic, attracting from the right and repelling from the left in the reals. In figures (c) and (d) the bifurcation occured and there are now two repelling fixed points in $\mathbb{C}\setminus\mathbb{R}$. The real points which converged to the parabolic point at bifurcation value, now converge slowly  to $x=0$. }}
\label{fig:dynam}
\end{figure*}

As discussed in the previous section, for $\eps\gtrsim 0$, the fixed points $x^*_{\pm}(\eps)$ become complex numbers, thus a dynamical study of $F_{\eps}$ from a complex point of view can help us to understand its dynamics.  Let us briefly recall a few concepts  of the theory of dynamics in one complex variable for polynomials (see \cite{Mi} for a  detailed introduction). As for real maps, a fixed  point $z_0$ of  a polynomial $f:\mathbb{C} \to \mathbb{C}$ is \emph{attracting, repelling} or {\it indifferent} depending on whether its multiplier $\lambda(z_0):=f'(z_0)$  is respectively smaller, larger or equal to one in modulus. Indifferent fixed points with multiplier $\lambda(z_0)=e^{2\pi i p/q}$ with $p/q\in\mathbb{Q}$ are also called parabolic. Likewise, we can classify periodic points substituting $f$ by $f^p$ where $p$ is the period of the orbit. 
%

 
The {\em basin of attraction} $\mathcal{A}(z_0)$ of an attracting or parabolic fixed point $z_0$ is an open set of the complex plane consisiting, as usual, on the points whose orbits converge to $z_0$, which belongs to the interior of $\mathcal{A}(z_0)$ in the attracting case, and which lies on the boundary if it is parabolic.
 %
 %
An important feature of the dynamics of complex polynomials is that $z=\infty$ is always a superattracting fixed point, i.e.\ $\lambda(\infty)=0$ (computed with the appropriate chart). Therefore, in the dynamical plane of a polynomial we can always find an open set of initial conditions, $A(\infty)$, whose orbits converge to $z=\infty$. This basin of attraction is always connected, since $\infty$ has no preimages in the complex plane.

The dynamics of the complex polynomial $f$ induces a partition of $\mathbb{C}$ into two completely invariant sets: The \emph{Fatou set} $\mathcal{F}(f)$ of points for which the family $\{f^n\}_n$ of iterates of $f$ is equicontinuous  in some neighbourhood of $z$; and its complement $\mathcal{J}(f)= \mathbb{C}\setminus\mathcal{F}(f)$, the  \emph{Julia set}. The Fatou set is open and consists of the points around which the dynamics is stable, while the Julia set is closed, often  fractal, and corresponds to the set of points whose orbits are chaotic. 
Note that basins of attraction of attracting and parabolic periodic points belong to $\mathcal{F}(f)$ while parabolic points themselves belong to $\mathcal{J}(f)$. 

Critical points, i.e.\ points $c$ such that $f'(c)=0$,  play an important role in holomorphic dynamics, since every basin of attraction must contain at least one critical point. This property bounds the number of possible stable equilibria that may coexist, and allows us to draw the bifrurcation set in the parameter space by colouring each parameter value according to   the asymptotic behaviour of the critical orbits (i.e. the orbits of the critical points) for that given parameter.  In Fig.~\ref{fig:param} (b) we show the complex $D-$plane of our model, the family $F=F_D$ in Eq.~\eqref{map}, for different fixed real values of $\mu$ and $\gamma$. The polynomials $F$ have two  critical points. Since $z=0$ is an attracting fixed point, the orbit of at least one of the critical points converges to $z=0$. Hence we colour the parameter in black if both critical orbits converge to $z=0$;  we use a scaling from yellow (slow convergence) to red (fast convergence) if one of the critical orbits converges to $z=\infty$, and we plot the parameter in green if one of the critical orbits converges neither to $z=0$ nor to $z=\infty$. Using this procedure, we can see how the open segments of real parameters $D$ for which $F$ has a real attracting periodic point other than $z=0$  (see the bifurcation diagrams in Fig.\ \ref{fig:param}) become open domains of complex parameters $D$ for which $F$ has an attracting periodic point in $\mathbb{C}$. Different connected components of the interior of the green set correspond to different periods of the attracting periodic orbit, being the largest one for period 1.

\begin{figure*}
{\centering 
   \includegraphics[width=0.85\textwidth]{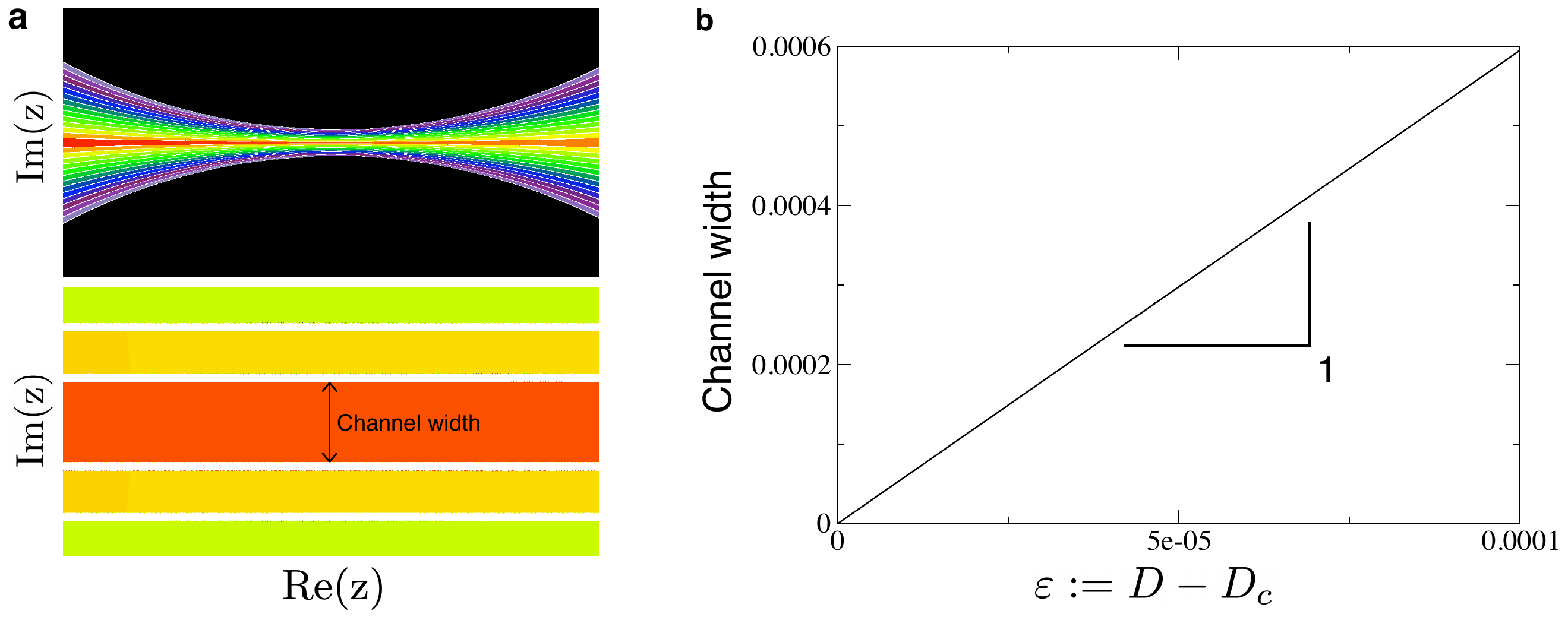}
   }
\caption{\small (a) Zooms of Fig.~\ref{fig:dynam} (d). The ranges of the initial conditions are $\rm{Re}(z)\in[x_c-10^{-4}, x_c+10^{-4}]$ and  $\rm{Im}(z)\in[-10^{-4}/2, 10^{-4}/2]$ (upper panel, labeled specular Dark Side Of The Moon) and $\rm{Re}(z)\in[x_c-2\cdot10^{-5}, x_c+2\cdot10^{-5}]$ and  $\rm{Im}(z)\in[-10^{-5}, 10^{-5}]$ (lower panel). (b) Dependence of the channel width on $\eps$ .}
\label{fig:canal}
\end{figure*}

We can now describe the s-n bifurcation from the complex plane. To that end we fix the parameters $\mu=4$ and $\gamma=0.2$ and draw the dynamical plane of $F_{\eps}(z)$, where $z\in\mathbb{C}$,  for values of $\eps$ before and after bifurcation (see Fig.~\ref{fig:dynam}). The pictures are done as follows. We take a grid of $1200\times 600$ points and iterate the corresponding initial conditions up to $10^5$ times. If the orbit reaches a given small neighbourhood of the attracting fixed point $z=0$ we plot the point using an scaling from red (fast convergence) to green, blue, purple and to grey (slow convergence). If the orbit escapes to $z=\infty$ we plot the point in white and, if after $10^5$ iterates the orbit has neither converged to $z=0$ nor to $z=\infty$ we plot the point in black. 

In Fig.~\ref{fig:dynam} (a) we plot the dynamical plane of $F_{\eps}$ for $\eps=-0.1$.  Since $\eps\lesssim 0$, the point $x_+^*(\eps)$ is attracting, and its basin of attraction can be seen in black. In red we see the basin of attraction of $z=0$.  The repelling fixed point $x_-^*(\eps)$ is the intersection point between the closure of the big red component, which contains $z=0$, and the closure of the big black component. 
In Fig.~\ref{fig:dynam} (b) we plot the dynamical plane of $F_{\eps}$ for $\eps=0$, the bifurcation parameter, for which $x_-^*(\eps)$ and $x_+^*(\eps)$ collide at the parabolic fixed point $x_c$, which is now the intersection point between the closure of the two basins. As before, red orbits converge to $z=0$ while black orbits now converge to to the parabolic point $x_c$. Notice that, since $x_c\in\mathcal{J}(F_{0})$, the dynamics around $x_c$ is not stable. 
Finally, in Fig.~\ref{fig:dynam} (c), we plot the dynamical plane of $F_{\eps}$ for $\eps=10^6$, for which we see how $x_-^*(\eps)$ and $x_+^*(\eps)$ exited the real line and became repelling - they are the center of the two spirals which appear near the former parabolic point in  Fig.~\ref{fig:dynam} (d), a zoom of (c). For this parameter we see with the scaling from red to green, blue, purple and grey the points which converge under iteration of $F_{\eps}$ to $z=0$. We observe how most of the points (in particular all the real ones) which converged to the parabolic point $x_c$ before perturbation now converge to $z=0$. However, we can see white orbits (i.e. orbits which converge to infinity) forming infinite spirals around $x_{\pm}^*(\eps)$, since now these two fixed points must belong to the boundary of the basin of infinity. 

All coloured orbits around $x_{\pm}^*(\eps)$ converge to $z=0$, but they need an increasing amount of iterates to do so, which is reflected in the colour. The two black disks, which correspond to initial conditions which have neither converged to $z=0$ nor escaped to $z=\infty$ in $10^5$ iterates, are numerical artifacts, and they would disappear if we increased  the number of iterates {\em ad infinitum}. These two spirals are, somehow, the complex dynamical obstruction which leads to the slow iteration of the real points through the former parabolic point $x_c$. Indeed, since the points $x_{\pm}^*(\eps)$ are very close, the channel between them is very narrow (see Fig.~\ref{fig:canal}), which leads to a derivative very close to one and hence to extremely slow dynamics (see Fig.~\ref{fig:scalinglaw}). Numerical experiments indicate that the channel width depends linearly in $\eps$ for $|\eps|$ small. For instance, for $\mu=4$ and $\gamma=0.2$ the channel width is approximately $5.9476 \cdot \eps$.  
 
  In the next section we analyse the relation between  the multipliers of the fixed points $x_{\pm}^*(\eps)$ and the time required to pass through the channel.

\subsection{Analytical derivation of the scaling-law using the multiplier}

For $D=D_c$ (i.e. $\eps=0$) the parabolic fixed point $x_c$ of $F_{0}$ is repelling from the left and attracting from the right (see Fig.~\ref{fig:epsesquema} (b), compare Fig.~\ref{fig:dynam} (b)). This follows from the fact that the second derivative of $F_{0}$ at $x_c$ is negative. Moreover, all points in the segment $(0, x_c)$ converge under iteration of $F_0$ to $x=0$. As discussed in the previous section, for $\eps\gtrsim 0$, the fixed points $x_{\pm}^*(\eps)$ become repelling complex fixed points. Moreover, the real points which converge to the parabolic point $x_c$ before perturbation, converge to $x=0$ after perturbation if $\eps$ is small enough (see Fig.~\ref{fig:dynam} (c), compare  Fig.~\ref{fig:epsesquema} (c)).

Let $x_{ini}>x_c$ be a point in the in the immediate basin of attraction of $x_c$ under $F_0$ (all points in the segment $(x_c, x_{ini}]$ converge under iteration of $F_0$ to $x_c$). The goal of this section is to determine the `asymptotic' number of iterates required by $x_{ini}$ to be mapped onto a given neighbourhood of the attracting fixed point $x=0$ under $F_{\eps}$. The next theorem estimates this quantity in terms of the multiplier  $\lambda(x_-^*(\eps))$ and recovers from it the scaling-law in terms of $\eps$. 

\begin{teor}\label{teor:scaling0}
	Let $x_{ini}>x_c$ be a point in the in the immediate basin of attraction of $x_c$ under $F_0$. Let $\eps>0$. Then, the number of iterates $N_{\eps}$ required to go from $x_{ini}$ to a given (fixed) neighbourhood of $x_0$ under $F_{\eps}$ is given by
	$$
	N_{\eps}\approx \frac{2\pi}{\rm{Im}(\lambda( x_-^*(\eps))}+K_1= \frac{\pi}{\mu\; x_c^{3/2}\;\eps^{1/2}}+K_2,
	$$
	where $K_1,K_2\in\mathbb{R}$ and $K_2=K_1+\mathcal{O}(1)$. 
\end{teor}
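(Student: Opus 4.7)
The plan is to work with the translated map $G_\eps$ from Section~\ref{sec:fixos} and to exploit the Euler--like approximation that turns the discrete iteration through the saddle--node channel into a tractable integral. Since $F_\eps$ and $G_\eps$ are conjugate by a translation, the iteration count is preserved, and the analysis becomes cleaner because the fixed points $y_\pm(\eps)=\pm i\,x_c^{1/2}\eps^{1/2}+\mathcal{O}(\eps)$ sit symmetrically about the origin, with the channel centred on $y=0$.

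First I would split the orbit $\{F_\eps^n(x_{ini})\}$ into three segments: (i) the finite approach from $x_{ini}$ into a fixed small neighbourhood $U=\{|y|<\delta\}$ of the former parabolic point, (ii) the slow passage through $U$ (the ``channel''), and (iii) the finite descent from the exit of $U$ down to the chosen neighbourhood of $x=0$. The iteration counts for (i) and (iii) are uniformly bounded in $\eps$ (for $\eps$ small enough they depend continuously on $\eps$ and on the fixed choices of $\delta$, $x_{ini}$ and the target neighbourhood), and will contribute to the constants $K_1, K_2$. The whole work therefore goes into estimating the iterates spent in $U$.

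For the channel I would use the local expansion of $G_\eps$ from \eqref{eq:Geps}:
\begin{equation*}
G_\eps(y)-y = -\mu x_c\bigl(y^{2}+x_c\eps\bigr) + \mathcal{O}(\eps y,\eps^{2},y^{3}),
\end{equation*}
which suggests comparing the discrete dynamics with the ODE $\dot y=-\mu x_c(y^{2}+x_c\eps)$. Treating $n$ as continuous time, the transit time across $U$ is
\begin{equation*}
T_\eps \approx \int_{\delta}^{-\delta}\frac{dy}{-\mu x_c(y^{2}+x_c\eps)}
=\frac{1}{\mu x_c^{3/2}\eps^{1/2}}\Bigl[\arctan\!\bigl(y/\sqrt{x_c\eps}\bigr)\Bigr]_{-\delta}^{\delta}.
\end{equation*}
As $\eps\to 0^+$ the bracket tends to $\pi$, producing the leading term $\pi/(\mu x_c^{3/2}\eps^{1/2})$; the finite--$\delta$ error in the bracket is $\mathcal{O}(\sqrt{\eps}/\delta)$, which when multiplied by $1/\sqrt{\eps}$ gives an $\mathcal{O}(1)$ contribution absorbed into $K_2$. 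Substituting the formula $\mathrm{Im}(\lambda(x_-^*(\eps)))=2\mu x_c^{3/2}\eps^{1/2}+\mathcal{O}(\eps)$ from equation~\eqref{eq:multy-} recovers the multiplier form $2\pi/\mathrm{Im}(\lambda(x_-^*(\eps)))+K_1$ and shows $K_2=K_1+\mathcal{O}(1)$.

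The main obstacle is making the discrete--to--continuous passage rigorous: one has to argue that the total error accumulated over $\mathcal{O}(\eps^{-1/2})$ iterates remains bounded. The cleanest route is via a Fatou--type coordinate $\Phi_\eps$, tailored to the perturbed parabolic point, that conjugates $G_\eps$ on a slit neighbourhood of $0$ to the translation $w\mapsto w+1$; then the iterate count in $U$ equals, up to a bounded error, the variation of $\Phi_\eps$ between entry and exit points of $U$, which one computes explicitly by integrating $dw = dy/(G_\eps(y)-y)$ and is precisely the integral above. An elementary alternative is to bound $|G_\eps(y_n)-y_n - (-\mu x_c(y_n^{2}+x_c\eps))|$ by $C(|\eps|\,|y_n|+|y_n|^{3})$ and to compare $\sum 1/(G_\eps(y_n)-y_n)$ with the Riemann sum of the ODE integral, controlling the sum of relative errors by the same arctan integral (which is $\mathcal{O}(1)$). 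Either route reduces the proof to checking that the lower--order corrections and the endpoint contributions from segments (i) and (iii) are absorbed into the additive constants $K_1, K_2$.
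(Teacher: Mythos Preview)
Your proposal is correct and shares with the paper the same three--segment decomposition and the same Euler--like replacement of the discrete passage time by the integral $\int dy/(G_\eps(y)-y)$. The genuine difference lies in how that integral is evaluated. You reduce $G_\eps(y)-y$ to $-\mu x_c(y^{2}+x_c\eps)$ and integrate directly via $\arctan$, obtaining the $\eps^{-1/2}$ scaling first and then recovering the multiplier form by substituting \eqref{eq:multy-}. The paper instead closes the real segment $[x_c-\delta,x_c+\delta]$ into a contour $\eta$ around $x_-^*(\eps)$ and evaluates $\oint_\eta dz/(F_\eps(z)-z)$ via the \emph{holomorphic index} $\iota(F_\eps,x_-^*(\eps))=1/(1-\lambda(x_-^*(\eps)))$, which produces the multiplier form $2\pi i/(\lambda-1)$ directly; the $\eps^{-1/2}$ law then follows by expanding $\lambda$. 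Your route is more elementary and self--contained; the paper's route buys a direct conceptual link between the transit time and the multiplier of the complex repeller (which is the whole point of the paper), and it transfers verbatim to the non--polynomial family $H_\eps$ of \S2.4 without ever needing an explicit antiderivative. On the issue of making the discrete--to--continuous passage rigorous, the paper simply cites \cite{DJMS,Oud}; your discussion via Fatou coordinates or Riemann--sum comparison is actually more detailed than what the paper itself provides.
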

\proof

Let $\delta>0$ fixed. For the unperturbed map $F_0$, the point $x_{ini}$ is mapped to the left of $x_c+\delta$ in a finite number of iterates of $F_0$. Analogously, the point $x_c-\delta$ is mapped onto a given neighbourhood of $x=0$ in a finite numbers of iterates. It follows that, for $\eps$ small, we can bound the number of iterates employed to go from $x_{ini}$ to the left of $x_c+\delta$ and from $x_c-\delta$ to a given neighbourhood of $x=0$ under $F_{\eps}$ by a constant $K_0$. Therefore, in order to estimate the number of iterates used to go from $x_{ini}$ onto a neighbourhood of $x=0$ it is enough to estimate the number of iterates required to go from $x_c+\delta$ to $x_c-\delta$ under $F_{\eps}$.

If $\epsilon$ and $\delta$ are small enough, since both $F_\eps(z)-z$ and $F_\eps'(z)-1$ are very small in modulus, the discrete model can be approximated by the vector field $z'=F_\eps(z)$, and hence the number of iterates $N_{\eps, \delta}$ required to go from $x_c+\delta$ to $x_c-\delta$ can be approximated by the integral
$$\int_{x_c+\delta}^{x_c-\delta}\frac{dx}{F_{\eps}(x)-x},$$
(see \cite{DJMS} and \cite[page 4]{Oud}). As shown in \cite{Fontich2008} (see also \cite{DJMS}) this integral can be computed using complex analysis techniques. 
\begin{figure}[hbt!]
\centering
  \setlength{\unitlength}{220pt}%
 \begin{picture}(1,0.6)%
    \setlength\tabcolsep{0pt}%
    \put(0.1,0){\includegraphics[width=\unitlength,page=1]{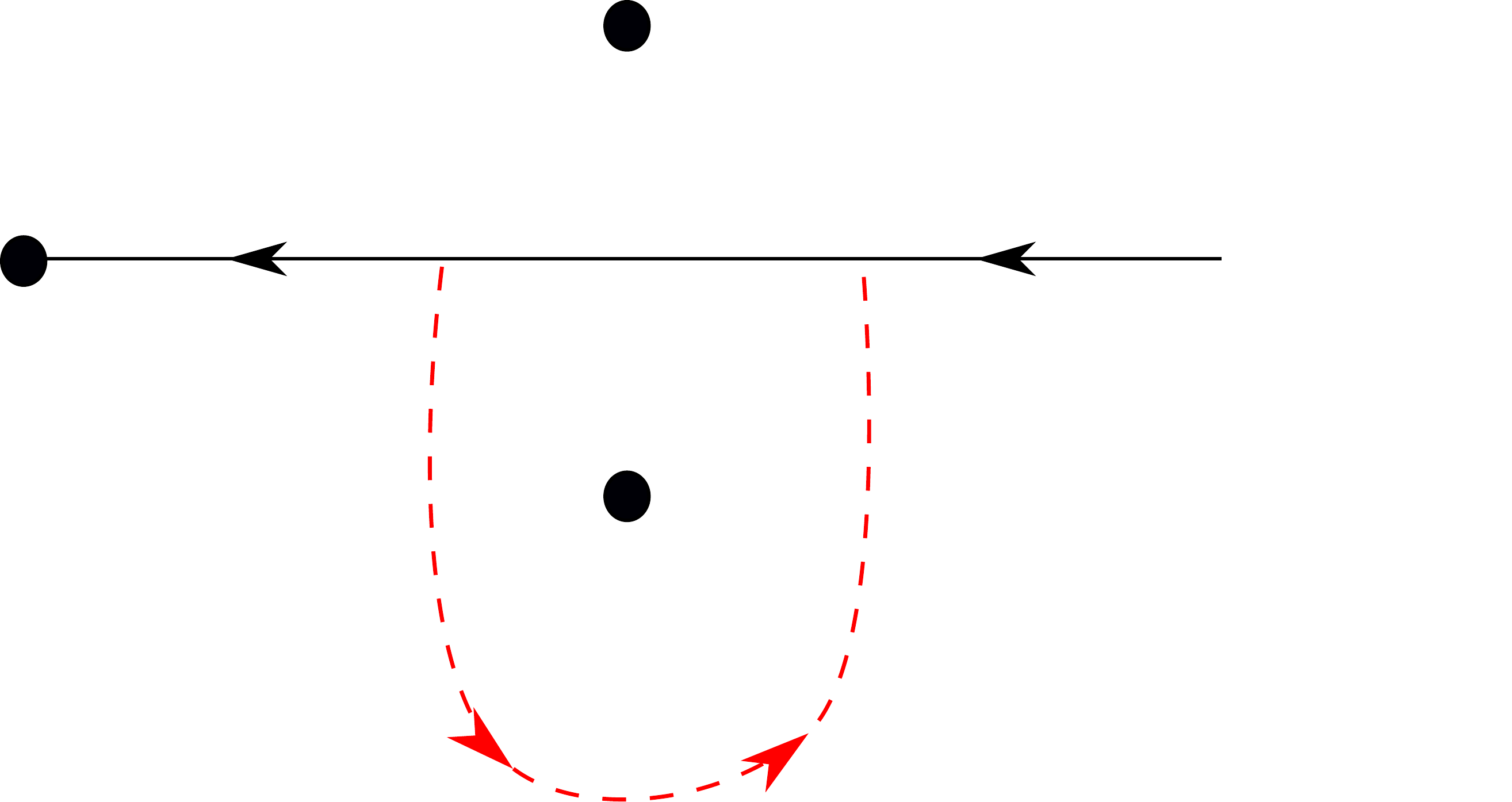}}%
    \put(0.51398567,0.22575472){\color[rgb]{0,0,0}\makebox(0,0)[lt]\smash{\begin{tabular}[t]{l}$x_-^*(\eps)$\end{tabular}}}%
    \put(0.11535925,0.39103877){\color[rgb]{0,0,0}\makebox(0,0)[lt]\smash{\begin{tabular}[t]{l}$0$\end{tabular}}}%
     \put(0.53343083,0.4688195){\color[rgb]{0,0,0}\makebox(0,0)[lt]\smash{\begin{tabular}[t]{l}$x_+^*(\eps)$\end{tabular}}}%
    \put(0.1,0){\includegraphics[width=\unitlength,page=2]{eta0.pdf}}%
    \put(0.3448125,0.393){\color[rgb]{0,0,0}\makebox(0,0)[lt]\smash{\begin{tabular}[t]{l}$x_c-\delta$\end{tabular}}}%
    \put(0.62676829,0.393){\color[rgb]{0,0,0}\makebox(0,0)[lt]\smash{\begin{tabular}[t]{l}$x_c+\delta$\end{tabular}}}%
    \put(0.44592754,0.04102524){\color[rgb]{1,0,0}\makebox(0,0)[lt]\smash{\begin{tabular}[t]{l}$\eta$\end{tabular}}}%
    \put(0.9,0.39){\color[rgb]{0,0,0}\makebox(0,0)[lt]\smash{\begin{tabular}[t]{l}$x_{ini}$\end{tabular}}}%
 \end{picture}%
\caption{\small Scheme of the curve $\eta$ used to estimate the number of iterates between $x_c+\delta$ and $x_c-\delta$.}
\label{fig:eta}
\end{figure}
Their idea is to consider a simple closed curve $\eta$, oriented counter-clockwise,  which intersects the real line exactly in the segment $[x_c+\delta, x_c-\delta]$ and surrounds the fixed point $x_-^*(\eps)$ (see Fig.~\ref{fig:eta}). Using the Residue Theorem, the integral over the whole curve $\eta$ is given by
\[
\oint_{\eta}\frac{dz}{F_{\eps}(z)-z}=2\pi i \;\mbox{Res}\left(\frac{1}{F_{\eps}(z)-z}, x_-^*(\eps)\right).
\]
Let $\eta':=\eta\setminus [x_c+\delta, x_c-\delta]$ and let $I_{\eps}^{\delta}:=\int_{\eta'}\frac{dz}{F_{\eps}(z)-z}$. Since for $\eps\gtrsim0$   the fixed points of $F_{\eps}(z)-z$ stay bounded away from the curve $\eta'$,  there exists $K_{\delta}>0$ independent from $\eps$ such that $\left| I_{\eps}^{\delta}\right|< K_{\delta}$. In fact, it is easy to see that $I_{\eps}^{\delta}=I_{0}^{\delta}+\mathcal{O}(\eps)$. Hence, the number of iterates $N_{\eps, \delta}$ required to go from $x_c+\delta$ to $x_c-\delta$ can be approximated by
{ \everymath={\displaystyle}
\begin{eqnarray*}
N_{\eps, \delta} &\approx& \int_{x_c+\delta}^{x_c-\delta}\frac{dx}{F_{\eps}(x)-x} \\ &=&
 2\pi i \;\mbox{Res}\left(\frac{1}{F_{\eps}(z)-z}, x_-^*(\eps)\right)- I_{\eps}^{\delta}. \nonumber
\end{eqnarray*}
 Therefore, to understand the asymptotic behaviour of $N_{\eps, \delta}$ as $\eps$ tends to 0 it is enough to study $2\pi i \;\mbox{Res}\left(\frac{1}{F_{\eps}(z)-z}\right)$.

In this paper we replace the use of $2\pi i \;\mbox{Res}\left(\frac{1}{F_{\eps}(z)-z}\right)$ by the use of the holomorphic index of $F_{\eps}$ at $x_-^*(\eps)$. For a detailed introduction to holomorphic index we refer to \cite[\S12]{Mi}. Given a fixed point $z^*$ of a holomorphic map $g$, its holomorphic index is given by
$$\iota(g, z^*)=\frac{1}{2\pi i}\oint\frac{dz}{z-g(z)},$$
where the circular integral is done over any simple closed which does not contain any fixed point of $g$ and only surrounds the fixed point $z^*$. An important property of the holomorphic index is that if the multiplier $\lambda(z^*)$ of $z^*$ is different from 1, then
$$\iota(g, z^*)=\frac{1}{1-\lambda(z^*)}.$$
In particular,
$$\iota(F_{\eps}, x_-^*(\eps))=\frac{1}{2\pi i}\oint_{\eta}\frac{dz}{z-F_{\eps}(z)}=\frac{1}{1-\lambda( x_-^*(\eps))}.$$
Notice that both the $\mbox{Res}\left(\frac{1}{F_{\eps}(z)-z}, x_-^*(\eps)\right)$ and $\iota(F_{\eps}, x_-^*(\eps))$ provide the same information. Indeed,
\begin{widetext}
{ \everymath={\displaystyle}
\begin{equation}
 \arraycolsep=1.4pt\def\arraystretch{2.2}
\oint_{\eta}\frac{dz}{F_{\eps}(z)-z} =2\pi i \;\mbox{Res}\left(\frac{1}{F_{\eps}(z)-z}, x_-^*(\eps)\right)
=-2\pi i \; \iota(F_{\eps}, x_-^*(\eps))=\frac{2\pi i}{\lambda( x_-^*(\eps))-1}.
 \label{eq:holind}
\end{equation}
}
\end{widetext}
Computing $\oint_{\eta}\frac{dz}{F_{\eps}(z)-z}$ using the holomorphic index instead of the explicit expression of the residue has two main advantages: It leads to simpler expressions and it brings up the multiplier of the bifurcated fixed points.  In particular, it allows us to relate the sought number of iterations with the complex dynamics which appear after the s-n bifurcation (see \S \ref{sec:complex}).

For the family $F_{\eps}$, the fixed point $x_-^*(\eps)$ has multiplier $\lambda(x_-^*(\eps))=\lambda(y_-(\eps))$  (compare \eqref{eq:multy-}). It follows that 
\begin{eqnarray*}
N_{\eps, \delta}&\approx& \int_{x_c+\delta}^{x_c-\delta}\frac{dx}{F_{\eps}(x)-x}
 = \frac{2\pi i}{\lambda( x_-^*(\eps))-1}- I_{\eps}^{\delta} \\&=& 
 \frac{2\pi}{\rm{Im}(\lambda( x_-^*(\eps))}+\mathcal{O}(1)- I_{\eps}^{\delta}.
\end{eqnarray*}

  We want to remark that the numbers $\frac{2\pi i}{\lambda( x_-^*(\eps))-1}$ and $I_{\delta}$ are in general complex. However, since $\int_{x_c+\delta}^{x_c-\delta}\frac{dx}{F_{\eps}(x)-x}$ is real, their difference is real. Therefore, the quantity $\mathcal{O}(1)-I_{\eps}^{\delta}$ is a real number. We also want to remark that from the previous computation we can recover the scaling law in terms of $\eps$. Indeed, 

\[
 \frac{2\pi}{\rm{Im}(\lambda( x_-^*(\eps))}=\frac{\pi}{\mu\; x_c^{3/2}\;\eps^{1/2}}+\mathcal{O}(1).
 \]

We can conclude that  the number of iterates $N_{\eps}$ required by the point $x_{ini}$ to be mapped onto a given neighbourhood of $x=0$ under $F_{\eps}$ grows like
$$
N_{\eps}\approx \frac{2\pi}{\rm{Im}(\lambda( x_-^*(\eps))}+K_1= \frac{\pi}{\mu\; x_c^{3/2}\;\eps^{1/2}}+K_2,
$$
where $K_1,K_2\in\mathbb{R}$.
 \endproof

In Fig.~\ref{fig:scalinglaw} we provide numerical experiments showing how this scaling-law of the number of iterates is satisfied for the family $F_{\eps}$.

\begin{figure*}
{\centering 
   \includegraphics[width=0.96\textwidth]{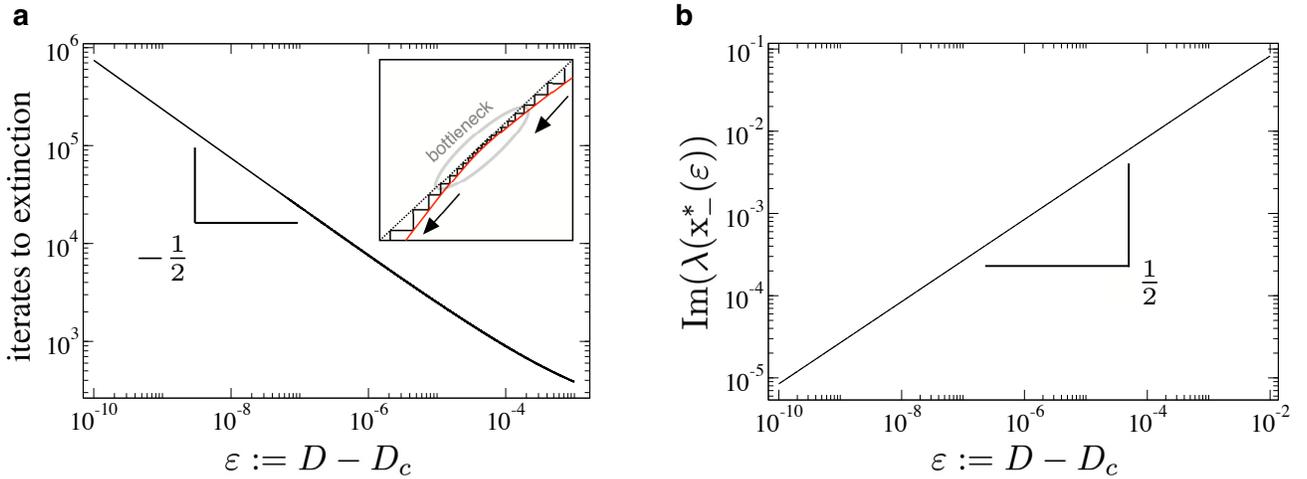}
   }
\caption{After the bifurcation the two fixed points that collided move to the complex  phase space becoming repulsors with complex derivative and hence spiriling nearby orbits. The horizontal axis displays the value of $\eps$ while the vertical axis records the number of iterates to cross the bottleneck (Fig.\ a) and the imaginary part of $\lambda( x_-^*(\eps))$ (Fig.\ b), right after the bifurcation with $\mu=4$ and $\gamma =2$. The inset displays an enlarged view of the  cobweb map with the bottleneck region generated once the map crosses the diagonal (right after the s-n bifurcation). Note the a lot of iterates are needed to cross the bottleneck (here we have also used $\mu=4$ and $\gamma =2$ and $\eps = 10^{6}$).
}
\label{fig:scalinglaw}
\end{figure*}

\subsection{The general case}

Computing the integral in terms of the multiplier of the fixed points right after bifurcation, allows us to study the scaling-law for  more general families of maps with a saddle node bifurcation. In the general case, we consider a family of maps $H_{\eps}(x)=H_{\eps}(x,\eps)$ which is real analytic in $x$ and $\eps$ (and hence extendable to a complex analytic one), defined in a neighbourhood of $x=0$, and  given by
{\small 
\begin{equation}\label{eq:H}
H_{\eps}(x)=a\cdot\eps^n +\mathcal{O}(\eps^{n+1})+(1+\mathcal{O}(\eps^{m}))x+(c+\mathcal{O}(\eps))x^2+\mathcal{O}(x^3),
\end{equation}  
}
where $a,c\neq 0$, $n<2m$, $n$ is odd and the higher order terms $\mathcal{O}(x^3)$ may also depend on $\eps$. We assume that all coefficients are real so that $H_{\eps}(x)$ sends the real line to the real line. Since $H_\eps$ is real analytic, there exists $r>0$ such that the Taylor series of $H_{\eps}(x)$ with respect to $x$ has a radius of convergence at least $r$ if $|\eps|$ is small enough. For $\eps=0$, the map $H_0$ has $x=0$ as parabolic fixed point. For simplicity and without loss of generality, we assume that $c<0$ so that $x=0$ is attracting from the right and repelling from the left as was the case for the map $F_0$ (compare Fig.~\ref{fig:epsesquema}). 
Likewise,  we assume that $a<0$ so that the configuration of the fixed points for $\eps\neq 0$ is the same than for $F_{\eps}$. The other cases are derived analogously. 
As it was the case for the family $F_{\eps}$, there is a saddle--node bifurcation which takes place for $\eps=0$.
 
\begin{prop}\label{prop:fixos}
If $|\eps|$ is small, $\eps\neq 0$, then the maps $H_{\eps}$ have exactly two fixed points near $x=0$ given by
$$
x_{\pm}^H(\eps)=\pm\sqrt{\frac{-a}{c}\eps^{n}}\;+\mathcal{O}(\eps^{\frac{n}{2}+t}),
$$
where $t\in \mathbb{R}^+$. Moreover, $t\geq 1/2$ if $n=1$.

 In particular, if $a,c<0$ and $\eps<0$, then there are two real fixed  given by
$$
x_{\pm}^H(\eps)=\pm\sqrt{\frac{a}{c}}\;|\eps|^{\frac{n}{2}}+\mathcal{O}(\eps^{\frac{n}{2}+t})
$$
such that $x_{-}^H(\eps)$ is repelling and $x_{+}^H(\eps)$ is attracting. On the other hand, if $a,c<0$ and $\eps>0$, the two fixed points lie in $\mathbb{C}\setminus \mathbb{R}$ and are given by 
\[
x_{\pm}^H(\eps)=\pm i\sqrt{\frac{a}{c}}\;\eps^{\frac{n}{2}}+\mathcal{O}(\eps^{\frac{n}{2}+t}).
\]
\end{prop}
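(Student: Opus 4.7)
The plan is to reduce the fixed point equation $H_{\eps}(x) = x$ to a regular perturbation problem by rescaling $x$ to the natural scale of the bifurcation, and then apply the Implicit Function Theorem. Using~\eqref{eq:H}, the fixed point condition reads
\begin{equation*}
a\eps^n + \mathcal{O}(\eps^{n+1}) + \mathcal{O}(\eps^m)\,x + (c + \mathcal{O}(\eps))x^2 + \mathcal{O}(x^3) = 0.
\end{equation*}
Balancing $a\eps^n$ against $cx^2$ suggests the scaling $x = \eps^{n/2} u$; substituting and dividing by $\eps^n$ yields
\begin{equation*}
\Phi(u,\eps) := a + cu^2 + \mathcal{O}(\eps) + \mathcal{O}(\eps^{m-n/2})u + \mathcal{O}(\eps) u^2 + \mathcal{O}(\eps^{n/2}) u^3 = 0,
\end{equation*}
where the hypothesis $n < 2m$ is precisely what guarantees that every non-leading exponent of $\eps$ is positive, so all correction terms vanish as $\eps \to 0$.

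At $\eps = 0$ the equation reduces to $a + cu^2 = 0$, whose roots $u = \pm\sqrt{-a/c}$ are simple because $\partial_u \Phi(\pm\sqrt{-a/c}, 0) = \pm 2c\sqrt{-a/c} \neq 0$. The holomorphic Implicit Function Theorem then produces two analytic branches $u_\pm(\eps)$ with $u_\pm(0) = \pm\sqrt{-a/c}$, defined in a complex neighbourhood of $\eps = 0$, and gives also the bound $u_\pm(\eps) - u_\pm(0) = \mathcal{O}(\eps^t)$ with $t = \min(1,\, m - n/2,\, n/2) > 0$, obtained by linearising $\Phi$ at $(\pm\sqrt{-a/c}, 0)$ and isolating the smallest error exponent. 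Undoing the rescaling yields the claimed expansion $x_\pm^H(\eps) = \pm\sqrt{-a\eps^n/c} + \mathcal{O}(\eps^{n/2+t})$, while local uniqueness from the IFT shows there are no other fixed points near $x=0$. When $n = 1$, the fact that $m$ is a positive integer with $2m > 1$ forces $m \geq 1$, hence $m - 1/2 \geq 1/2$ and $t \geq 1/2$, as asserted.

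The real/complex dichotomy for $a, c < 0$ is then immediate from the parity of $n$: $\eps^n$ has the sign of $\eps$ while $-a/c < 0$, so $-a\eps^n/c > 0$ for $\eps < 0$ (real roots $\pm\sqrt{a/c}\,|\eps|^{n/2} + \cdots$) and $-a\eps^n/c < 0$ for $\eps > 0$ (complex conjugate pair $\pm i\sqrt{a/c}\,\eps^{n/2} + \cdots$). Stability in the real case follows by direct differentiation of~\eqref{eq:H}:
\begin{equation*}
H_{\eps}'(x_\pm^H) = 1 + 2c\, x_\pm^H + \mathcal{O}(\eps^m) + \mathcal{O}\bigl((x_\pm^H)^2\bigr) = 1 \pm 2c\sqrt{a/c}\,|\eps|^{n/2} + \mathcal{O}\bigl(|\eps|^{n/2+t}\bigr).
\end{equation*}
Since $c < 0$ and $\sqrt{a/c} > 0$, the multiplier is slightly above $1$ at $x_-^H$ (repelling) and slightly below $1$ but positive at $x_+^H$ (attracting). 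The main conceptual subtlety is the choice of branch of $\eps^{n/2}$ for $n$ odd, which becomes purely imaginary on $\eps < 0$; working in the holomorphic category from the start bypasses this, since the Implicit Function Theorem is applied to $\Phi(u,\eps)$, which is polynomial in $u$ and holomorphic in $\eps$, and the real/complex nature of the roots is then read off at the end.
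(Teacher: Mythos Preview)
Your route---rescale $x = \eps^{n/2}u$ and apply the Implicit Function Theorem to the reduced equation---is genuinely different from the paper's. The paper instead invokes Rouch\'e's Theorem on $H_\eps(x)-x$ to count exactly two fixed points near $0$, then substitutes a dominant-balance ansatz $y_\eps = \kappa\eps^s + \mathcal{O}(\eps^{s+t})$ and matches lowest-order terms to identify $s=n/2$ and $\kappa=\pm\sqrt{-a/c}$; for $\eps<0$ it locates the real fixed points via an intermediate value argument on $[z_-,z_+]$, and for $\eps>0$ it uses the Schwarz Reflection Principle to show the two candidates are complex conjugates and hence both occur. Your approach is more systematic and packages existence, asymptotics, and the multiplier computation into one step, whereas the paper's is more hands-on and sidesteps the analyticity issue below.

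There is, however, a real gap in your regularity claim. You assert that $\Phi(u,\eps)$ is ``polynomial in $u$ and holomorphic in $\eps$'', but after dividing by $\eps^n$ the remainder terms carry exponents $\eps^{m-n/2}$ and $\eps^{n/2}$, which for $n$ odd are \emph{not} analytic at $\eps=0$; the holomorphic IFT therefore does not apply in the variable $\eps$ as written. The standard repair is to set $\sigma^2=\eps$ and rescale $x=\sigma^n u$ instead: all exponents become integers, $\Phi(u,\sigma)$ is genuinely analytic near $(\pm\sqrt{-a/c},0)$, and the IFT yields $u_\pm(\sigma)=\pm\sqrt{-a/c}+\mathcal{O}(\sigma^p)$ with $p=\min(2,\,2m-n,\,n)$, recovering your $t=\min(1,\,m-n/2,\,n/2)$. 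A smaller secondary point: IFT uniqueness is local in $u$, but the $u$-region corresponding to a \emph{fixed} $x$-neighbourhood $\{|x|<\delta\}$ is $\{|u|<\delta/|\sigma|^n\}$, which blows up as $\sigma\to 0$; so ``exactly two fixed points near $x=0$'' still requires either a Rouch\'e step (as in the paper) or the easy a~priori bound $|x|=\mathcal{O}(|\eps|^{n/2})$ for any nearby fixed point before invoking uniqueness.
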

\begin{proof}
	If $\eps=0$ we have
\[
H_{0}(x)=x+cx^2+\mathcal{O}(x^3),
\]
so $x=0$ is a double fixed point of $H_{0}(x)$ (it is a double solution of $H_{0}(x)=x$). Since $H_{\eps}(x)$ is analytic in $x$ and $\eps$ and non-constant,  it follows from Rouche's Theorem (see, e.g., \cite{Ru}) that for $\eps$ small enough  there are exactly two fixed points of $H_{\eps}(x)$ in a small (complex) neighborhood of $0$,  which converge  to $x=0$ as $\eps\to 0$. We look for candidates of the form $y_{\eps}=\kappa\eps^s+\mathcal{O}(\eps^{s+t})$, where $t,s\in \mathbb{R}^+$, i.e. we want to find $s>0$ and $\kappa\in\mathbb{C}$ such that $y_{\eps}$ is a fixed point of $H_{\eps}(x)$ as $\eps$ tends to 0. Fixed points of $H_{\eps}(x)$ are solutions of $H_{\eps}(x)=x$, which can be written as
\[
0=a\cdot\eps^n+\mathcal{O}(\eps^{n+1})+\mathcal{O}(\eps^{m})x+(c+\mathcal{O}(\eps))x^2+\mathcal{O}(x^3).
\]

Replacing $x$ by $y_{\eps}$ we obtain
\begin{eqnarray*}\label{eq:dom}
0&=&a\cdot\eps^n+\mathcal{O}(\eps^{n+1})+\mathcal{O}(\eps^{m})(\kappa\eps^s+\mathcal{O}(\eps^{s+t}))
\\&+&(c+\mathcal{O}(\eps))(\kappa\eps^s+\mathcal{O}(\eps^{s+t}))^2+\mathcal{O}(\eps^{3s})
\\&=&a\cdot\eps^n+\mathcal{O}(\eps^{n+1})+\mathcal{O}(\eps^{m+s})+\mathcal{O}(\eps^{m+s+t})
\\&+&c\cdot\kappa^2\cdot\eps^{2s}+\mathcal{O}(\eps^{q})+\mathcal{O}(\eps^{3s}),
\end{eqnarray*}
where $q=\min(2s+1, 2s+t)$. If $y_{\eps}$ is a fixed point, there must be two terms with equal rate of decrease to zero, the slowest of all,  and which cancel out.  These two terms must be among $\mathcal{O}(\eps^n)$, $\mathcal{O}(\eps^{m+s})$ or $\mathcal{O}(\eps^{2s})$. On a closer look however, we see that they need to be $\mathcal{O}(\eps^n)$ and $\mathcal{O}(\eps^{2s})$. Indeed, since $n<2m$, if we had $n=m+s$ then $m>s$ and the term  $\mathcal{O}(\eps^{2s})$ would stand alone as the one with slowest decrease. Likewise, if $m+s=2s$ we would get $m=s$ and the term $\mathcal{O}(\eps^{n})$ would have no pair. Hence, the terms $a\cdot\eps^n$ and $c\cdot\kappa^2\cdot\eps^{2s}$ have to cancel out, which  implies that $s=n/2$ and $\kappa=\pm\sqrt{-a/c}$. The constant $t>0$ can  be found using similar arguments. In particular, it is easy to check that $t\geq 1/2$ if $n=1$. As a result we obtain  
\[
x_{\pm}^H(\eps)=\pm\sqrt{\frac{-a}{c}\eps^{n}}\;+\mathcal{O}(\eps^{\frac{n}{2}+t}).
\]
%
To finish the proof we show that both $x_{+}^H(\eps)$ and $x_{-}^H(\eps)$ are fixed points of  $H_{\eps}$.
We assume that $a,c<0$, but similar arguments can be done for the other cases. Since $c<0$, for $\eps=0$ the point $x=0$ is attracting from the right and repelling from the left: there are $z_-<0$ and $z_+>0$ such that $H_0(x)<x$ for all $x\in[z_-,0)\cup(0,z_+]$ (compare Figure~\ref{fig:dynam}~(b)). It follows that if $\eps<0$, $|\eps|$ small enough, then $H_{\eps}(z_-)<z_-$, $H_{\eps}(0)>0$ and $H_{\eps}(z_+)<z_+$ (here we are using that $a<0$ and $n$ is odd). Therefore, there is a fixed point $x_{-}^H(\eps)\in(z_-,0)$ which is repelling in $\mathbb{R}$ and a fixed point $x_{+}^H(\eps)\in(0, z_+)$ which is attracting in $\mathbb{R}$ (compare Figure~\ref{fig:dynam}~(a)).  It can be shown that their multipliers satisfy $\lambda(x_{-}^H(\eps))>1$ and $\lambda(x_{+}^H(\eps))<1$. 
 
 If $\eps\gtrsim 0$, we know that all candidates to fixed points have the form
 \[
 x_{\pm}^H(\eps)=\pm i\sqrt{\frac{a}{c}}\;\eps^{\frac{n}{2}}+\mathcal{O}(\eps^{\frac{n}{2}+t}).
 \]
 In particular, the fixed points of $H_{\eps}(x)$ near $x=0$ lie in $\mathbb{C}\setminus\mathbb{R}$. We will now use the Schwartz Reflection Principle, which establishes that if an analytic $H$ map leaves the real line invariant then $\overline{H(z)}=H(\overline{z})$ for all $z\in\rm{dom}(H)$, where $\overline{z}$ denotes the complex conjugate. Since $\overline{x_{\pm}^H(\eps)}=x_{\mp}^H(\eps)$, it follows that if $x_{+}^H(\eps)$ is a fixed point of $H_{\eps}(x)$, then so is $x_{-}^H(\eps)$. Hence, we cannot have two fixed points of the form $x_{+}^H(\eps)$ or the form $x_{-}^H(\eps)$. This finishes the proof. 
\end{proof}

If $a,c<0$ and $\eps>0$, the two complex fixed points $x_{\pm}^H(\eps)$ are repelling. Indeed,
the derivative $\partial H_{\eps}(x)/\partial x= H'_{\eps}(x)$ of the map $H_{\eps}$ is given by
\[
H'_{\eps}(x)=1+\mathcal{O}(\eps^{m})+(-2|c|+\mathcal{O}(\eps))x+\mathcal{O}(x^2).
\]
We obtain that the multipliers $\lambda(x_{\pm}^H(\eps))=H_{\eps}'(x_{\pm}^H(\eps))$  for $\eps>0$ are given by
\[
\lambda(x_{\pm}^H(\eps))=1\mp i \;2\sqrt{a \; c}\; \eps^{\frac{n}{2}}+\mathcal{O}(\eps^{\frac{n}{2}+\alpha}),
\]
where $\alpha= \mbox{min}\{t,m-n/2,n/2,1\}$. It follows easily that $|\lambda(x_{\pm}^H(\eps))|>1$, so  $x_{\pm}^H(\eps)$ are repelling.

 Similarly to what we did in Theorem~\ref{teor:scaling0}, we now compute the number of iterates required to pass near the former parabolic fixed point $x=0$ for $\eps>0$. 
 \begin{teor}\label{teor:scalinggeneral}
	Let $\delta>0$ small and let $\eps>0$. The number of iterates $N_{\eps}^H$ required to go from $x=\delta$ to $x=-\delta$ under $H_{\eps}$ is given by

\begin{eqnarray*}
	N_{\eps}^H \displaystyle &\approx &
	\frac{2\pi}{\rm{Im}(\lambda( x_-^H(\eps))}+\textcolor{black}{K_1+\mathcal{O}(\eps^{\alpha-n/2})} \\
	&=&\frac{\pi}{\sqrt{a\; c} \;\eps^{\frac{n}{2}}}+\textcolor{black}{K_2+\mathcal{O}(\eps^{\alpha-n/2})}, 
	\end{eqnarray*}
where $K_1,K_2\in\mathbb{R}$ and $\alpha >0$. In particular, if $n=1$, then $\alpha\geq n/2$ and hence 
\[
N_{\eps}^H \displaystyle\approx 
	\frac{\pi}{\sqrt{a\; c} \;\eps^\frac12}+\mathcal{O}(1).
\]
 \end{teor}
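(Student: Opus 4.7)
The plan is to mirror the proof of Theorem~\ref{teor:scaling0} line by line, replacing $F_\eps$ with $H_\eps$ and invoking Proposition~\ref{prop:fixos} together with the multiplier expansion displayed just before the statement. First I would approximate the number of iterates by the continuous integral
\[
N_\eps^H\approx \int_\delta^{-\delta}\frac{dx}{H_\eps(x)-x},
\]
justified via the same vector-field argument as in Theorem~\ref{teor:scaling0} (see \cite{DJMS} and \cite[page 4]{Oud}); this is legitimate because $H_\eps(x)-x$ and $H'_\eps(x)-1$ are uniformly small on a neighbourhood of $0$ for $\eps$ and $\delta$ small, so analyticity (rather than polynomial structure) is the only ingredient really needed here.

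Next, I would close the real segment into a simple counter-clockwise curve $\eta$ whose intersection with $\mathbb{R}$ is $[-\delta,\delta]$ and which encloses only the fixed point $x_-^H(\eps)$. Setting $\eta':=\eta\setminus[-\delta,\delta]$ and $I_\eps^\delta:=\int_{\eta'}\frac{dz}{H_\eps(z)-z}$, one sees as in the polynomial case that $I_\eps^\delta=I_0^\delta+\mathcal{O}(\eps)$ is bounded, since on $\eta'$ the integrand stays away from its zeros uniformly in small $\eps$. The holomorphic index (cf.\ \cite[\S12]{Mi}) applied to the only enclosed fixed point then gives
\[
\oint_\eta\frac{dz}{H_\eps(z)-z}=\frac{2\pi i}{\lambda(x_-^H(\eps))-1},
\]
so splitting the contour into its real-segment part and $\eta'$ yields $N_\eps^H\approx \frac{2\pi i}{\lambda(x_-^H(\eps))-1}-I_\eps^\delta$.

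Finally, substituting the expansion $\lambda(x_-^H(\eps))=1+i\cdot 2\sqrt{ac}\,\eps^{n/2}+\mathcal{O}(\eps^{n/2+\alpha})$ and inverting via a geometric series gives
\[
\frac{2\pi i}{\lambda(x_-^H(\eps))-1}=\frac{\pi}{\sqrt{ac}\,\eps^{n/2}}+\mathcal{O}(\eps^{\alpha-n/2}),
\]
whose leading term is real. Since $N_\eps^H\in\mathbb{R}$, taking real parts absorbs $-\rm{Re}(I_\eps^\delta)$ into a real constant $K_2$; the first form of the statement follows by rewriting the leading term as $\frac{2\pi}{\rm{Im}(\lambda(x_-^H(\eps)))}$ using $\rm{Im}(\lambda(x_-^H(\eps)))=2\sqrt{ac}\,\eps^{n/2}+\mathcal{O}(\eps^{n/2+\alpha})$. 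The $n=1$ case is then a corollary: Proposition~\ref{prop:fixos} guarantees $t\geq 1/2$, and combined with $n<2m$ (which, for integer exponents, forces $m\geq 1$ and thus $m-n/2\geq 1/2=n/2$), we get $\alpha=\min\{t,m-n/2,n/2,1\}\geq n/2$, so $\alpha-n/2\geq 0$ and the error collapses to $\mathcal{O}(1)$.

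The main obstacle I foresee is the bookkeeping of the error $\mathcal{O}(\eps^{\alpha-n/2})$ through the inversion of $\lambda(x_-^H(\eps))-1$: one must verify that the higher-order correction inside the multiplier truly yields a \emph{real} contribution of that order, rather than a larger imaginary one that could survive after taking the real part, and that no hidden lower-order term enters via $I_\eps^\delta$. A secondary technical point is justifying the vector-field approximation for a general analytic (not polynomial) family, which reduces to standard convergent Taylor bounds on a fixed neighbourhood of $0$ that are uniform for $|\eps|$ small.
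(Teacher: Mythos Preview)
Your proposal is correct and follows essentially the same route as the paper's own proof: approximate $N_\eps^H$ by the real integral, close the segment into a contour $\eta$ enclosing only $x_-^H(\eps)$, evaluate via the holomorphic index to get $\frac{2\pi i}{\lambda(x_-^H(\eps))-1}-I_\eps^{H,\delta}$, and then substitute the multiplier expansion, with the $n=1$ case handled via $t\ge 1/2$ and $m\ge 1$ from Proposition~\ref{prop:fixos}. Your write-up is in fact more explicit than the paper's (which simply says ``using the same arguments as in Theorem~\ref{teor:scaling0}'' and records the resulting chain of equalities), and the two technical worries you flag---the $\mathcal{O}(\eps^{\alpha-n/2})$ bookkeeping through the inversion and the vector-field approximation for analytic families---are not addressed any more carefully in the paper than in your outline.
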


 \proof
 
 Let $\eta$ be a simple closed curve which intersects the real line exactly in the segment $[-\delta, \delta]$, surrounds the fixed point $x_-^H(\eps)$ (compare Fig.~\ref{fig:eta}), and does not surround any other fixed point of $H_{\eps}$. Let $\eta':=\eta\setminus [-\delta, \delta]$ and let $I_{\eps}^{H,\delta}:=\int_{\eta'}\frac{dz}{H_{\eps}(z)-z}$. 
 Using the same arguments as in Theorem~\ref{teor:scaling0}, we conclude that the number of iterates $N_{\eps}^H$ required to go from $x=\delta$ to $x=-\delta$, where $\delta>0$ is small enough, grows like

\begin{eqnarray*}
N_{\eps}^H \vspace{0.2cm}\displaystyle &\approx &
\frac{2\,\pi\, i }{\lambda(x_{-}^H(\eps))-1}-I_{\eps}^{H,\delta} =\frac{2\pi}{\rm{Im}(\lambda( x_-^H(\eps))} \\ &+&K+\mathcal{O}(\eps^{\alpha-n/2})= \frac{\pi}{\sqrt{a\; c} \;\eps^{1/2}}+K+\mathcal{O}(\eps^{\alpha-n/2}), 
\end{eqnarray*}
 where $K\in\mathbb{R}$. Notice that $I_{\eps}^{H,\delta}=I_0^{H,\delta}+\mathcal{O}(\eps)$. Finally observe that  if $n=1$ we have that $m\geq 1$ and $t\geq 1/2$ (see Prop.\ref{prop:fixos}), hence $\alpha-\frac{1}{2}\geq 0$.  \endproof
 
 \begin{rem}
  Theorem~\ref{teor:scalinggeneral} can be stated analogously  if $a>0$ or $c>0$ (or both). We only state it for $a<0$ and $c<0$ for the sake of simplicity. However, there are two aspects to consider when dealing with $a>0$ or $c>0$. First, depending of the signs of $a$ and $c$, the fixed points $x_{\pm}^H(\eps)$ exit the real line either for positive or negative $\eps$. This has to be taken into account when dealing which each configuration. Second,  if $c>0$ there is a small difference in the arguments. In that case, the parabolic point $x=0$ is attracting from the left and repelling from the right under $H_0$. As a consequence, after perturbation we want to estimate the number of iterates required to go from  go from $x=-\delta$ to $x=\delta$. Since the integral of the holomorphic index is done counter-clockwise over the curve $\eta$, in that case it is convenient to consider the holomorphic index at $x_{+}^H(\eps)$ instead of the one at $x_{-}^H(\eps)$ so that $\oint_{\eta}\frac{dz}{H_{\eps}(z)-z}$ contains the path integral $\int_{-\delta}^{\delta} \frac{dx}{H_{\eps}(x)-x}$. As a result, the formula in Theorem~\ref{teor:scalinggeneral} depends on  $\lambda(x_{+}^H(\eps))$ instead of $\lambda(x_{-}^H(\eps))$.
 \end{rem}
 
 \begin{rem}
  Notice that  after centering the family $F_{\eps}$  at the bifurcation point $x_c$, we obtain the family $G_{\eps}$ in Eq.\eqref{eq:Geps}, which is a particular case of $H_{\eps}$ in Eq.~\eqref{eq:H}. Indeed, $G_{\eps}$ corresponds to a family $H_{\eps}$ with $a=-\mu x_c^2$, $c=-\mu x_c$, and $n=m=1$. Using these parameters,  Theorem~\ref{teor:scaling0} for $F_{\eps}$ follows from Theorem~\ref{teor:scalinggeneral}.
\end{rem}

  We would like to point out that the condition that $n<2m$ is essential to guarantee that a non-degenerate s-n bifurcation takes place. Also, the condition $n$ odd is required to guarantee that the two fixed points change their \textrm{behaviour} before and after perturbation. To finish this section we provide examples of  families for which these conditions are not satisfied and there is no non-degenerate s-n bifurcation. The first family is
  $$H_{1,\eps}(x)=\eps^2+(1+2\eps)x+x^2.$$
  This family has no s-n bifurcation since $x=-\eps$ is a permanent parabolic fixed point of multiplier 1. The second family is
   $$H_{2,\eps}(x)=\eps^2+x+x^2.$$
   If $\eps=0$, $x=0$ is a parabolic fixed point of multiplier 1. However, if $\eps\neq 0$  then the map $H_{2,\eps}$ has two different fixed points in $\mathbb{C}\setminus\mathbb{R}$ which are given by
   $$x_{\pm}=\pm i \eps.$$
   Their multipliers are 
   $$H'_{2,\eps}(x_{\pm})=1\pm i\; 2 \; \eps.$$
   Therefore, for $\eps\neq 0$ small both the fixed points are repelling.
   
   The last family we consider is
   $$H_{3,\eps}(x)=\eps^3+(1+\eps)x+x^2.$$
   If $\eps=0$, $x=0$ is a parabolic fixed point of multiplier 1. If $\eps\neq 0$,  $|\eps|$ small, then there are two real fixed points given by
   $$x_{\pm}=\frac{-\eps\pm \; \eps\sqrt{1-4\eps} }{2}= \frac{-\eps\pm \; (\eps-2\eps^2) }{2}+\mathcal{O}(\eps^3).$$
   Their multipliers are
   $$H'_{3,\eps}(x_-)=1-2\eps+\eps^2 +\mathcal{O}(\eps^3)$$
   and $$H'_{3,\eps}(x_+)=1+\eps-\eps^2+\mathcal{O}(\eps^3).$$

Therefore, one of the fixed points is attracting and the other is repelling, depending on the sign of $\eps$.

\vspace{0.5cm}

\section{Concluding remarks}

In this work we have investigated the dynamical mechanism governing the delays occurring right after a saddle-node (s-n) bifurcation. To do so we have complexified a biological model describing the dynamics of facilitation among individuals of the same species under habitat destruction~\cite{Sardanyes2019}, which has been previously discretised using a single-step Euler approach. Despite the fact that previous research succeeded in obtaining the well-known inverse square-root scaling law using complex analysis~\cite{Fontich2008,DJMS}, the fundamental dynamical mechanism occurring at the complex phase space and responsible for the extremely long delays tangible in the real phase space was still unknown. 

We have here addressed this question by providing a thorough analysis of the complex (as opposed to real) dynamics occurring right after the s-n bifurcation. The complexification of the resulting map has allowed us to identify that the two fixed points undergoing the s-n bifurcation become symmetric unstable spirals in the complex phase space. Right after the s-n bifurcation, both spirals appear to be extremely close to the real line (forming a very narrow channel), where delays take place and the ghost slows down the orbits until the origin (specie's extinction) is achieved. We have studied how the width of the channel in its imaginary dimension affects transients, showing that the inverse square-root scaling law obeys to a linear widening of this channel. Moreover, by determining stability properties of the spirals, we have been able to obtain a simple relation between the multipliers of these fixed points and the scaling law found for passage times post-bifurcation, given by the well-known inverse square-root scaling law~\cite{Strogatz2000}.  Finally, we have proven the same phenomenon for a more general model given by Eq.~\eqref{eq:H}, which includes other mathematical expressions than polynomials. In this sense, our general model considers the bifurcation of two fixed points allowing for much more terms in $\eps$, as a difference from the general model studied in~\cite{Fontich2008}, which only considered a single term in $\eps$ but allowed bifurcations of $2n$ fixed points.  

As far as we know, our work provides, for the first time, a rigorous analysis of the structure of the complex phase space and its holomorphic dynamics behind ghost phenomena, providing the fundamental dynamical mechanism behind the appearance of the extremely long transients governed by ghosts.
%

\section*{Declaration of competing interests}
The authors declare that they do not have any financial or nonfinancial conflict of interests.

\section*{CRediT authorship contribution statement}
{\bf Jordi Canela}: Conceptualization, Formal analysis, Software, Investigation, Supervision, Writing - original draft. {\bf N\'uria Fagella}:  Conceptualization, Formal analysis, Investigation, Supervision, Writing - original draft, Funding acquisition. {\bf Llu\'is Alsed\`a}: Investigation,  Supervision, Writing - original draft, Funding acquisition. {\bf Josep Sardany\'es}: Conceptualization, Software, Investigation, Writing - original draft, Funding acquisition.

\section*{Acknowledgments}
This work is supported by the Spanish State Research Agency, through the Severo Ochoa and Mar\'ia de Maeztu Program for Centers and Units of Excellence in R\&D (CEX2020-001084-M). We thank CERCA Programme/Generalitat de Catalunya for institutional support.
J.C.  was supported by Spanish Ministry of Economy and Competitiveness, 
through the Mar\'ia de Maeztu Programme MDM-2014-0445, 
by BGSMath Banco de Santander Postdoctoral fellowship 2017, by  the project 
UJI-B2019-18 from Universitat Jaume I, and by PID2020-118281GB-C32. N.F. was partially supported by the MCIN/AEI grants 
MTM2017-86795-C3-3-P and PID2020-118281GB-C32 and the Catalan government grants 2017SGR1374 and 
ICREA Acad\`emia 2020. Ll.A. was partially supported by the MCIN/AEI grants
PID2020-118281GB-C31, MTM2017-86795-C3-1-P and  MDM-2014-0445 within the 
Mar\'ia de Maeztu Program. 
J.S. has been partially funded from AEI grant RTI2018-098322-B-I00 and the Ram\'on y Cajal contract RYC-2017-22243. 


\end{document}